\newtheorem{Assumption}{Assumption}
\def\boldsymbol#1{\setbox\ewb\hbox{$#1$}%
    \setlength{\deno}{-\wd\ewb+0.05em}{ #1}\hspace{\deno}{#1}}
\begin{document}

\begin{frontmatter}

\title{Generalized Maximum Likelihood Estimators 
and their applications 
to stratified sampling and post-stratification with many  unobserved strata.}

\author{\fnms{Eitan} \snm{Greenshtein}\ead[label=e2]{eitan.greenshtein@gmail.com}}
\address{Israel Census Bureau of Statistics; \printead{e2}}
\affiliation{Israel Census Bureau of Statistics}

\author{\fnms{Ya'acov} \snm{Ritov}
%\thanksref{t3}
\ead[label=e3]{yaacov.ritov@gmail.com}}
\address{University of Michigan; \printead{e3}}
%\thankstext{t3}{Research supported by an ISF grant.}
\affiliation{University of Michigan.}

\runauthor{ Greenshtein,   Ritov}

\maketitle

\begin{abstract}

Consider the problem of estimating  a weighted average of the means of $n$ strata, based on a random sample  with realized
$K_i$ observations from stratum $i, \; i=1,...,n$. 

This task is non-trivial  in cases where for  a significant portion of the strata the corresponding $K_i=0$.
Such a situation may happen in post-stratification, when it is desired to have a very fine sftratification.
A fine stratification could be desired in order that assumptions, or, approximations, like  Missing At Random conditional on strata, will be appealing.
A fine stratification could also be desired in  observational studies, when it is desired to estimate average treatment effect, by averaging 
the effects in small and homogenous strata.

Our approach is based  on applying Generalized Maximum Likelihood Estimators (GMLE), and ideas that are related
to Non-Parametric Empirical Bayes, in order to estimate the means of strata  $i$ with corresponding $K_i=0$. There are no assumptions
about  a relation between the means of the unobserved strata (i.e., with $K_i=0$) and those of the observed strata.

The performance of our approach is demonstrated both in simulations and  on a real data set.
Some consistency and asymptotic  results are also presented. In addition, related basic results
about GMLE estimation of the mean of mixtures of exponential families are provided.

\end{abstract}

\end{frontmatter}

\section{Introduction}

In this paper we study the problem of estimating population's average
when the sampling distribution is not entirely known to the statistician,
and Horwitz-Thompson estimators can not be applied.

An example of such a situation is  non-response, see, e.g., Little and Rubin (2002),
where response probabilities of individuals
are not known and hence neither their sampling probabilities. Here, the  sampling probability
of an individual  refers to the probability of obtaining the corresponding value of interest
and, in particular, getting a response from that individual. 
An approach to treat non-response in a different setup, is Greenshtein and
Itskov (2018).  In that paper they  apply GMLE ideas  to estimate  the unknown response probabilities.

Another situation where sampling probabilities are not entirely known is  Observational Studies,
see, e.g., Rosenbaum and Rubin (1983) and Robins and Ritov (1997).  Consider for example a situation
where it is desired to estimate the `average treatment effect', where treatment is vaccination,
and vaccination is not mandatory. Then, the probability of an individual  to get a vaccination
is not based on the experimenter assignment and it is not known. 
%In this paper average treatment effect, refers to the population's average treatment effect, as in Ritov and Robins,
%and not to the average treatment effect among the participants in the study, as in Rubin and Rosenbaum.

Our approach, as most other approaches, relies on the `{\it hope}' that within delicate enough strata
(or, conditional on a fine stratification), the effect of the sampling distribution becomes negligible, as in
`Missing At Random"' (MAR) assumption, or  `Strongly Ignorable Assignment' assumption. Of course,
such assumptions are typically only `approximately right', yet, they become more appealing as the strata become finer.
%As in Robins and Ritov, we need to know the population's distribution of the  stratification %variable, or, simply, the proportion of each strata in the population.

On one hand the negligibility of the sampling distribution conditional on strata, has more appeal, as the strata become finer. On the other hand, under very fine stratification, there will be many unobserved strata, which creates difficulty in estimating the population's average. Our, novel contribution is to suggest GMLE approach to handle unobserved strata and yield estimators for
their corresponding strata's average, and consequently to
the population's average. 
%This is in spite of the many empty strata, e.g., when there is 
%half an observation per strata on the average. Thus, we allow `pushing' towards very fine
%stratification. 

Let $K_i$ be the distribution  of the random size of the number of observations from stratum $i$.
Our GMLE approach is for situations where one may reasonably model parametrically the  distribution
of $K_i$.

\subsection{ Motivation and Triangular Array Formulation and Asymptotics.}

Initially, it is convenient to describe and motivate our approach through strata
with equal proportions, i.e., $n$ strata  with (known) equal proportions $1/n$ in the population. Later, in Section \ref{sec:gen} we generalize to unequal proportions.

It is simpler to motivate and explain our ideas in light of binary variables. 
Specifically, it is desired to estimate a population's proportion $p=\frac{1}{n} \sum p_i$, where $p_i$ 
 is the population's proportion in Strata $i$.
In Section \ref{sec:gen}
we will generalize to non-binary variables.

\bigskip

{\bf Non-Response}

Consider a setup of non-response, where the response 
probability 
of 
a random subject from
stratum $i$ 
is 
denoted $\pi_i$, $\pi_i<1$. Here $\pi_i$ does not necessarily equal to
$\pi_{ij}$--the response probability of  individual $j$ in 
Stratum $i$. It is planned to sample $\kappa_i$ units from Stratum $i$,
however, the number of responders is $K_i \leq \kappa_i$.

In order to fix ideas, suppose that it is desired to  estimate the proportion of unemployed in the population. It is known that in the relevant surveys, neighborhoods
(or, more generally, strata) with higher unemployment rates have lower response
rates.

Let $\alpha_i$ be the proportion of unemployed in Stratum $i$, 
while $p_i$ is the  expected proportion among sampled  responders  
from Stratum $i$.
Formally, let $I$ be an indicator of the event that a randomly sampled
person is unemployed, let $R$ denote the event that the person responded, and let $S_i$
be the event that the sampled person is in Stratum $i$. Then:
$$\alpha_i=E(I| S_i), \;\;\; p_i= E(I|S_i, R).$$ 

In general $\alpha_i \neq p_i$. 
Equality follows under 
`Missing At Random' (MAR)  conditional on the strata.
%e.g., when $\pi_{ij} \equiv \pi_i$.
The quantity of interest is $\alpha= \frac{1}{n} \sum \alpha_i$.
The motivation and effort in this paper, for the estimation of ${p}=\frac{1}{n} \sum p_i$ is 
through `approximate MAR assumption',
under which ${p_i} \approx {\alpha_i}$, $i=1,...,n$.

The later has more appeal as the  stratification becomes finer.

In this setup, modeling the number of observations $K_i$, obtained from
Stratum $i$,  as Binomial, is reasonable. 

\bigskip

{\bf Observational Study and Causal Effect.}

Suppose we want to study the effect of 
receiving a vaccine (treatment)  on the probability 
of catching a certain disease.
If vaccination is not mandatory, the number of 
people, $K_i$, that `voluntarily' receive  the treatment from stratum $i$, 
$i=1,...,n$, is random.
Moreover, the probability of an individual to voluntarily get the 
treatment  is not really known.
In such an observational study,  modeling  
$K_i$
%--the number of people
%receiving the treatment from 
%stratum $i$, 
as Poisson is reasonable. 

Let $\alpha_i$ be the probability of catching the disease for a random subject from Stratum $i$, that receives the vaccine. Let $p_i$ be the probability of catching the 
disease, for a random person from Stratum $i$ in the observational study, that 
voluntarily receives the
vaccine.  
Let $I$ be an indicator of the event the person caught the disease; let $S_i$ be as before. Let $A$ denote the event that the person was vaccinated, let $B$ denote
the event that the person  was `voluntarily' vaccinated. Here $A$ reflects an 
event in a potential designed experiment,
while $B$ is an event in the actual observational study. Formally,

$$\alpha_i= E(I| S_i, A), \;\; p_i=E(I| S_i, B).$$

In general $p_i \neq \alpha_i$. Equality is implied under assumptions of the nature of
`Strongly Ignorable Assignment'.

The quantity of interest is $\alpha= \frac{1}{n} \sum \alpha_i$.
Our motivation to estimate $p=\frac{1}{n} \sum_i p_i$ 
is based on the `hope' that $\alpha_i \approx p_i$. The later `hope' appeals more as the stratification  becomes finer.

The above approach is readily extended  to comparison between two treatments, by simply
estimating the corresponding $p_{ij}$ separately for treatment $j$, $j=1,2$. This is conceptually different than some ideas that use propensity score, where it is desired to make a retrospect matching between observations that received different treatments, based on their propensity score.

 \bigskip

{\it Summary.} The above discussion and the distinction between $\alpha$ and $p$, is brought in order to motivate the estimation of $p$ rather than $\alpha$, since the former estimation is practical. In addition it motivates the very fine stratification where consequently many of the strata are unobserved, i.e., with $K_i=0$.  In the following sections we concentrate on the problem of estimating $p$, and {\it not} on the problem  of estimating $\alpha$.

As elaborated in the following, when we have more observations the stratification becomes finer, and thus, even in an asymptotic sense as the number of observations approaches infinity,
there are many unobserved strata.

\bigskip

\bigskip

{\bf Triangular array Parametrization.}

As fore mentioned, when the number of observations $m$ increases, we consider different, typically finer, stratification. Let $n=n(m)$ be the number of strata in our stratification.
The choice of a specific stratification and in particular the relation $n=n(m)$ is beyond the scope of this paper.

Given a sequence of stratifications with $n$ strata, consider  the corresponding
$p_i^n$ and $\alpha_i^n$. Let  $ p^n= \frac{1}{n} \sum p_i^n$ and $\alpha^n=\frac{1}{n} \sum \alpha_i^n$. Then, $\alpha^1=\alpha^2=...\equiv\alpha$, while typically $p^1\neq p^2 \neq ...$.
As explained, we `hope' that for large $n$ (or, under fine stratification) $p^n$ becomes closer to 
$\alpha$.  Our approach of estimating a sequence of parameters $p^n$ in a sequence of problems
is in the spirit of `triangular array', see Greenshtein and Ritov (2004). 
Their setup involves prediction rather then estimation in this paper, see also Greenshtein 
(2006).
In particular, the goal at stage $n$ is to estimate $p^n$ by some $\hat{p}^n$, thus, 
the target parameter $p^n$ is changed with $n$.
In the spirit
of triangular array, in our estimation problem, given a sequence of stratifications, we say that $\hat{p}^n$
has  a persistence property iff for any sequence of distributions,
$$\hat{p}^n - p^n \rightarrow_W 0;$$ here we consider weak convergence, or, convergence in probability.
Note, there is no assumption that $p^n \rightarrow \alpha$; if the later is satisfied  then  persistence is just consistency in the estimation of 
$\alpha$. 
In persistence, we only require that the estimation of the parameter $p^n$ is `decently' done for large $n$. The later indicates that the stratification is not too delicate.

In the sequel, we neglect the above formalities, and just right $p$ rather than $p^n$.
In our GMLE formulation we embed the $n'th$ problem in an auxiliary asymptotic problem
in which we may address the issue of consistency in estimating $p^n$, rather than formal persistence. We elaborate on the triangular array formulation and on persistence in Sub-Section \ref{rate}, 
but  it is mostly neglected in the sequel.

\subsection{ Random sample sizes scenarios, and generalizations. }

As aforementioned, the following are two realistic  scenarios, 
where the sample sizes $K_i$ are random.

Scenario i) {\it Stratified sampling with non-response.}
Consider a situation where  it is planned to randomly 
sample $\kappa_i$  subjects from stratum $i$. However, 
the probability of a random  subject from stratum $i$ to 
respond is $\pi_i \leq 1$.   
The number of actual responses, $K_i$, is reasonably 
modeled, 
for large strata, as  distributed 
$K_i \sim B(\kappa_i, \pi_i)$.

Scenario ii): {\it  Post Stratification}. 
We conduct a random sample 
from a population, 
let $K_i$ be the number of  responded subjects from 
stratum
$i$, $i=1,...,n$. 
When $n$ is large and strata are small, it is 
reasonable to model $K_i$ as distributed  $Poisson(\lambda_i)$. 
This  is reasonable both, under probability $\pi_i=1$  
and  $\pi_i<1$, of response from a random subject 
from stratum $i$. This scenario appeals also in observational studies,
as previously discussed.

\bigskip

{\bf Generalizations.} 

\bigskip

In Section \ref{sec:gen} we will also consider the more general 
problem 
of estimating $\sum a_i p_i$ for  given $a_1,...,a_n$, 
where $a_i \equiv 1/n$ does not necessarily  hold. 
Consider, for example 
the case where $a_i$ 
is the known proportion of 
stratum $i$ 
in 
the population, and strata are not of equal sizes. 

\bigskip

We will further generalize to the problem of estimating 
\begin{equation} \label{eqn:general} \sum a_i \mu_i, \end{equation} 

where $\mu_i=E \frac{X_i}{K_i}$, and $X_i=\sum_{j=1}^{K_i} X_{ij}$,
$X_{ij}$ is 
the 
measurement of item $j$ sampled from 
stratum $i$. 

%This is done without assuming a parametric model
%relating  the distribution of $X_i$, $\mu_i$ and $K_i$. 

%Again, a special emphasis will be given to the case $K_i \sim Poisson(\lambda_i)$.
 
\section{ Problem formulation and various approaches.}

Let $(X_i, K_i)$, $i=1,...,n$, be independent random 
vectors, where the conditional distribution of $X_i$ 
condition on $K_i$ is $B(K_i,p_i)$. 
It is desired to estimate  $${p}= \frac{\sum_i p_i}{n},$$ 
based on the observed $(X_i,K_i)$, $i=1,...,n$.

Given a population, we think of $n$ disjoint and 
exhaustive strata,  where $X_i$ is the number of  (say) 
unemployed in a random sample of size $K_i$ from
stratum $i, \; i=1,...,n$. 

% Denote, $X_i=\sum_{j=1}^{K_i} X_{ij}$,

%$X_{ij}$ is the measurement on  individual $j$ in stratum $i$.

When the strata are of equal size,  
${p}$ is the proportion of unemployed  in the population.  
%A special emphasis is given to the case $K_i \sim Poisson(\lambda_i)$.

\bigskip

{\bf Difficulty}

The difficulty in the above estimation problem is that,
the obvious estimator:
 $$\frac{1}{n} \sum_{i=1}^n  \frac{X_i}{K_i},$$
is not defined in case $K_i=0$ for some $i$.

In such cases there are ad-hoc approaches known as 
`collapsed strata', 
where small or empty strata 
are 
unified, after 
the data is observed.  See, e.g.,  
Wolter (1985), Chapter 2; there, the emphasis is on 
estimating the variance of estimators.

\bigskip

An `extreme collapsing' is to a single stratum, 
(which is, in fact, desirable when $p_1=...=p_n$). It yields 
the Extreme--Collapsing estimator:

$$\frac{ \sum X_i}{\sum K_i}.$$

In a non-response setup, under uniform sampling
and strata of equal size, the later estimator is 
natural  when individuals are  `Missing Completely 
At Random' (MCAR).

\bigskip

{\bf Naive estimator.}

Suppose that the strata that "`did not respond"' (i.e., with $K_i=0$),
are "`Missing Completely At Random"'. Formally, for $I$ 
uniform on $\{1,...,n \}$,
$E(p_I|K_I >0)=E(p_I) $, consequently, $E \frac{1}{m} \sum_{\{i|K_i>0\}} p_i= \frac{1}{n} \sum p_i$. Then, 
a reasonable   unbiased estimator is:

\begin{equation} \label{eqn:naive} Naive =\frac{1}{m} \sum_{ \{i| K_i>0 \} }  \frac{X_i}{K_i},  \end{equation} 
where $m=\#\{i|K_i>0\}$.  Assume $m>0$ w.p.1, in order to avoid formal difficulties
in the above.

\bigskip

%(Here we define  $0/0=1$ (say), to account for 

%the case $m=0$; 
%we assume 

%$P(m=0) \rightarrow 0$ as $n \rightarrow \infty$.)

However, in typical cases, strata are not missing 
completely at random, e.g., as mentioned, strata with higher rates
of 
unemployment are known to have lower response rates. Hence their corresponding 
$K_i$ are smaller and those strata are more likely to be missing, or, equivalently, 
more likely to have $K_i=0$. 

%Thus, the problem of deriving an asymptotically 
%unbiased estimator for $p$ is non-trivial.

Note: when $K_i \equiv K >0$, the Extreme--Collapsing and the Naive estimators are identical.

\subsection {Our GMLE Estimator}

We consider a setup where  $(X_i,K_i) \sim F_{\theta_i}$, 
for a latent/unobserved $\theta_i$, and a known 
parametric family $\{ F_\theta , \; \theta \in \Omega \} $.

The joint distribution of $(X_i,K_i,\theta_i)$ is denoted $G^*$.
The marginal of $\theta_i$ is denoted $G$.
Under our Non-Parametric approach $G$ is completely 
unknown.

We consider an auxiliary  setup where $(X_i,K_i,\theta_i) \sim G^*$ are iid,
as in Empirical-Bayes.

In Scenario i),  $\theta_i=(\theta_{i1},\theta_{i2}) \equiv (\pi_i,p_i)$,
where $\pi_i$ is the probability of response, while $p_i$ is the 
proportion  (of, say, unemployed)  in stratum $i$.
Conditional on $\theta_i$, $X_i \sim B(K_i,p_i)$ and $K_i \sim B(\kappa_i, \pi_i)$.

In Scenario ii),  $\theta_i=(\theta_{i1},\theta_{i2}) \equiv (\lambda_i, p_i)$.
Conditional on $\theta_i$, $X_i \sim B(K_i,p_i)$  as before, 
and $K_i \sim Poisson(\lambda_i)$.

%Note, $G^*$ is determinded by $G$. Thus, 

%an estimator $\hat{G}$ induces an estimator $\hat{G}^*$.

\bigskip

Denote $Y_i=(X_i,K_i)$.
Denote by $f_{\theta_i}(y) \equiv f(y|\theta_i)$ the conditional density of $Y_i=(X_i,K_i)$
conditional on $\theta_i$.

Given $\tilde{G}$, let $ f_{\tilde{G}}(y)= \int f_\theta(y) d\tilde{G}(\theta)$.
Note,  $Y_i$ are iid with density $f_G$.

The GMLE $\hat{G}$ (Kiefer and Wolfowitz (1956) ) is:
$$\hat{G}= argmax_{\tilde{G}} \; \Pi f_{\tilde{G}}(Y_i).$$

Traditionally the GMLE is computed via EM-algorithm, and this is also our method of computation in the current paper. 
Recently, Koenker and Mizera (2014) suggested 
exploitation 
of convex optimization techniques. 
See also the quadratic programming approach
of 
Greenshtein and Itskov (2018). 

\bigskip

{\bf Our estimator. }

In our motivating examples $\theta_i \equiv (\theta_{i1},\theta_{i2}) \equiv (\theta_{i1},p_i)$.

%In Scenario i) $\theta_{i1}=\pi_i$, in Scenario ii) $\theta_{i1}=\lambda_i$.
The distribution of $\theta_i$ is like a r.v. $\Theta=(\Theta_1,\Theta_2) \sim G$.
Estimate $E_G \Theta$ via $$E_{\hat{G}} \Theta.$$

{\bf Remark:} In case that $\hat{G}$ is not unique, we may consider our estimator as a
`set' of all estimates that correspond to all GMLE; alternatively, as we do in the sequel, as the estimator $E_{\hat{G}} \Theta$ that corresponds to the unique point $\hat{G}$ that the optimization procedure
(in this paper, the EM algorithm), converges to.

\bigskip

Note, in both scenarios, our target is: $$p= E_G \Theta_2.$$  
Note further: under the empirical Bayes formulation 
we replace the prediction of the (random) quantity 
$\frac{1}{n} \sum p_i$  by the estimation of $E_{G^*} \Theta_2$. 
The two quantities are  asymptotically equal.
On the relation between the two tasks of prediction and estimation see
Zhang (2005).

{\it Consistency of our estimator.}
In  Scenario ii)   it may be shown that,
under mild conditions, 
$\hat{G}$ is consistent for $G$  (i.e., weakly converges to $G$). 
Thus, $E_{\hat{G}} \Theta$ is consistent for $E_G \Theta$.
See Theorem \ref{thm:con}. and the following corollary.

In Scenario i)   $\hat{G}$ is 
{\it not} consistent for $G$.
This may be seen, since $Y_i=(X_i,K_i)$ has only finitely
many, $M$, different potential outcomes and 
identifiability becomes an issue.  In particular, 
regardless of $G$, there exists a 
GMLE $\hat{G}$, 
supported on
$M+1$ points at most, 
see Teicher (1963), 
Lindsay (1985).
See also, Example  \ref{ex:bin}.

\bigskip

Although in Scenario i), we do not have consistency,
our estimator, $E_{\hat{G}} \Theta$, still has an intuitive appeal
and surprisingly good performance in examples, 
it will be 
explored in simulations, along with Scenario ii).

Finding GMLE for a two dimensional distribution, as we do, only recently appears 
in the literature, see,
e.g., Gu and Koenker (2017), Feng and Dicker (2018). One reason might be its recent popularity
due to Koenker and Mizera`s computational methods.

\section{Simulations.}

In all of the following simulations, for convenience, 
the true $p$ equals 0.5. 
For any parameter configuration, the number of 
repetitions is 50.

The GMLE was computed via EM algorithm on a 
grid. The grids for $(\theta_{i1},\theta_{i2})$ contain 
$40 \times 40=1600$ grid points in a range that fits the relevant problem.
The parametrization in Scenario ii) is via a two dimensional Poisson, as explained in 
Section 5.

The number of iterations of the EM algorithm is 1000, 
and the last one is taken as the GMLE. The initial `guess' for $G$
is uniform on the grid points.

\bigskip

\subsection{ Poisson sample sizes.}

%We study Case ii), where the realized sample size from 

%stratum $i$ 
%is Poisson with parameter $\lambda_i$. 

%In each simulation there  are $n=1000$ strata.

There are two types, Type I and II, of strata, 
500 of each type.
We report on the mean of the Naive and the GMLE 
estimators for $p$, and their corresponding sample 
standard deviation,  based on 50 repetitions for 
each 
configuration.

%{\bf Discrete ${\bf G}$.}

There are three configurations. In each configuration the corresponding $G$
is discrete having two points
support, corresponding to Type I and II strata.

Type I have corresponding  $\theta_i \equiv (\lambda_0^I,p_0^I) \equiv \theta^{I}$,
Type II  have corresponding  $\theta_i \equiv (\lambda_0^{II},p_0^{II}) \equiv \theta^{II} $.

%Thus,  the corresponding $G$ has two points support.

\bigskip

In the following Table 1, in boldface is the average 
of 50 estimates, and in parenthesis  is the sample 
standard deviation of those estimates.

\begin{table} \label{tab:discrete}
\caption{Poisson Simulation discrete $G$.}
\begin{center}
\begin{tabular}{ |c||c|c| } 
 \hline 
    $\theta^{I},\theta^{II}$ & Naive & GMLE \\
	\hline \hline	
 $(2,0.4),(1,0.6)$ & {\bf 0.486}, (0.014) & {\bf 0.503}, (0.020) \\ 
 $(2,0.2),(1,0.8)$  & {\bf 0.453}, (0.015) & {\bf 0.496}, (0.018) \\ 
 $(2,0.2),(0.5,0.8)$ & {\bf 0.385}, (0.013) & {\bf 0.505}, (0.022) \\ 
 \hline
\end{tabular}
\end{center}
\end{table}

We also simulated more complicated scenarios, where the support of $G$
is continuous.  No, additional remarkable insights were discovered.
In the following we report on one such additional simulations.

\bigskip

{\bf Continuous ${\bf G}$ .}

In the simulations, presented in Table 2 the support of $G$ 
is continuous.
We have two types of strata, I and II, 500 of each type.
The distributions of $\theta^I$ and $\theta^{II}$ 
that correspond to the two strata are as
follows.

Type I: $\lambda_i \sim  {\mbox Uniform}(0.5,1)$, 
while the corresponding $p_i$ for strata $i$ of 
Type I is fixed  $p_i=p^I$.

Type II:  $\lambda_i \sim  {\mbox Uniform}(0.5,2)$,  
while the corresponding $p_i$ for strata $i$ of 
Type II is fixed  $p_i=p^{II}$.

In Table 2 we summarize three 
configurations,
where $p^{II}=(1-p^I$), $p^I$=0.4, 0.3, 0.2.

\bigskip 

\begin{table}
\label {tab:cont}
\caption{Poisson simulation with continuous $G$.}
\begin{center}
\begin{tabular}{ |c||c|c| } 
 \hline 
    $p^{I}$ & Naive & GMLE \\
\hline \hline	
 $0.4$ & {\bf 0.513}, (0.015) & {\bf 0.500}, (0.023) \\ 
 $0.3$ & {\bf 0.529}, (0.017) & {\bf 0.501}, (0.027) \\ 
 $0.2$ & {\bf 0.538}, (0.016) & {\bf 0.491}, (0.027) \\ 
 \hline
\end{tabular}
\end{center}

\end{table}

\subsection{ Binomial sample sizes.}

We study Scenario i), where $K_i$, the realized sample size from stratum $i$, is 
distributed $B(\kappa_i,\pi_i)$.
%All the reported simulation results are based on 50 repetitions. 
%Recall, conditional on $K_i$,
%$X_i \sim B(K_i,p_i)$.

\bigskip

Again, our simulated populations have two types of strata, 
500 of each type.

In the simulations reported in Table 3,
Type I strata have $\pi_i=p_i \equiv p^I$,
similarly, Type II strata have $\pi_i=p_i \equiv p^{II}$.

In this case, the distribution $G$ of $\theta_i=(\pi_i,p_i)$,
is discrete, having two points support.
We summarize three 
configurations, where $p^I=(1-p^{II})$.

In all cases $\kappa_i \equiv 4$. The corresponding $p$
equals 0.5 throughout.

\begin{table} \label{tab:Binom1}
\caption{Binomial Simulation. $\kappa \equiv 4$}\label{tab2}
\begin{center}
\begin{tabular}{ |c||c|c| } 
 \hline 
    $\p^I,\p^{II}$ & Naive & GMLE \\
	\hline \hline	
 $0.2,0.8$ & {\bf 0.559}, (0.012) & {\bf 0.502}, (0.014) \\ 
 $0.3,0.7$ & {\bf 0.522}, (0.011) & {\bf 0.504}, (0.012) \\ 
 $0.4,0.6$ & {\bf 0.504}, (0.010) & {\bf 0.501}, (0.010) \\ 
 \hline
\end{tabular}
\end{center}
\end{table}
\bigskip

{\bf Continuous $G$.}

%{\it Further Binomial simulations.}
In the following simulations, reported in Table 4, the distribution $G$
of $\theta_i=(\pi_i,p_i)$ is continuous. For the same
$G$, we examine the values $\kappa_i \equiv \kappa=1,2,3,4,5.$

Type I strata have $p^I \sim \pi^I \sim U(0.1,0.6)$, 
Type II strata have $p^{II} \sim \pi^{II} \sim U(0.4,0.9)$; 
here, $p^k$ and $\pi^k$ are independent, $k=I,II$.
There are 500 strata of each type.

%In addition
%$p^I \sim U(0.1,0.6)$ and $p^{II} \sim U(0.4,0.9)$ are random. 
%We allowed $\kappa=1,2,3,4,5$.  

In all cases, again, $p=0.5$

\begin{table} \label{tab:Binomial2}
\caption{Binomial Simulations. $\kappa$=1,2,3,4,5.}\label{tab3}
\begin{center}
\begin{tabular}{ |c||c|c| } 
 \hline 
    $\kappa$ & Naive & GMLE \\
	\hline \hline	
 $1$ & {\bf 0.544}, (0.019) & {\bf 0.530}, (0.015) \\ 
 $2$ & {\bf 0.528}, (0.014) & {\bf 0.502}, (0.021) \\ 
 $3$ & {\bf 0.522}, (0.014) & {\bf 0.498}, (0.022) \\ 
 $4$ & {\bf 0.517}, (0.012) & {\bf 0.499}, (0.020) \\
 $5$ & {\bf 0.512}, (0.009) & {\bf 0.501}, (0.013) \\
 \hline
\end{tabular}
\end{center}
\end{table}
\bigskip

It is surprising how well the GMLE is doing
already for $\kappa=2,3$, in spite of the 
non-identifiability of $G$ and the
inconsistency of the GMLE. See also, Example \ref{ex:bin}.

\section{   Real Data Example.}

A rough description of the `Social-Survey', conducted 
yearly by the Israeli census bureau, is the following.  
We randomly sample a $1/1000$  fraction of the 
individuals in the registry, then verify  their home 
address and interview them in person.

We study real  `social survey' data,  accumulated for
Tel-Aviv, in the surveys  collected  in the years 
2015, 2016, 2017. The total sample size  in the 
three years is 1256. 
There are 156 `statistical-areas' in Tel-Aviv, very 
roughly of equal size, about 3000 individuals in each.  
Statistical-Areas are considered homogeneous in many 
respects, and we take them as strata.

Let $K_i$ be the  sample size in stratum $i$, $i=1,...156$, 
then our data satisfy $K_i>0$, $i=1,...,156$.
( In fact, we neglect a few small strata that actually 
had zero sample sizes ).

Let $p_i$ be the proportion of individuals in stratum $i$,
that own their living place.
The goal is to estimate $\frac{1}{n} \sum p_i$.
When the strata are of equal size, the later is the 
proportion of individuals that own their living-place  
(or, owned by a member of their household). 
The survey is of  individuals whose age is 20 or more.

The estimated proportion (per the three 
years) obtained by the naive/(obvious) estimator is: 
$$\frac{1}{n} \sum_{i=1}^n  \frac{X_i}{K_i}=
{\bf  0.434}.$$ 
Note, the naive estimator is applicable,
 since $K_i>0$, $i=1,...,156$.  Note further, that 
in this case, the naive
estimator is GMLE, as shown in corollary \ref{cor:naive}.

The `extreme collapse' 
estimator, satisfy: $$\frac{\sum X_i}{\sum K_i}={\bf 0.488}.$$

The significant difference between the two estimates 
has to do   {\it also} with the fact that strata are, in fact, not of 
equal size. But, more  importantly for us, the 
`extreme collapse'  seems  to over estimate the 
proportion, since owners are over represented in 
the sample. One reason is that their address in 
the registry is more accurate and thus it is easier to 
find them. In other words individuals are not MCAR. 
This phenomena is partially corrected by the 
stratification, when MAR conditional on the fine strata 
is approximately right.

\bigskip

In the following we simulated  scenarios in which  only 
a portion $\gamma$ of the described sample,  would have been attempted to 
be sampled. The results of 25 simulations applied on 
the real data  with $\gamma=0.1, 0.2, 0.25$, are presented 
in the following  Table 5.  The (random) number of 
simulated strata with zero sample sizes, corresponding 
to $\gamma=0.1, 0.2, 0.25$,  are around 70, 40, and 30, 
correspondingly.

Both, for the entire data  and the simulations, the 
sample  sizes $K_i$ are modeled as ${\mbox Poisson}(\lambda_i)$, $i=1,...,156$. 
The naive estimate based on  the entire data equals {\bf 0.434},
which is a reasonable benchmark.

\bigskip

\begin{table}
\caption{ Real Data.}\label{tab4}
\begin{center}
\begin{tabular}{ |c||c|c| } 
 \hline 
    $\gamma$ & Naive & GMLE \\
	\hline \hline	
 $0.1$ & 0.471 & 0.457 \\ 
 $0.2$  & 0.467 & 0.443 \\ 
 $0.25$ & 0.447 & 0.434 \\ 
 \hline
\end{tabular}
\end{center}
\end{table}
\bigskip

\section{Theoretical and asymptotic Results.}

Some of the results in this section are under a general formulation, beyond Scenarios i) and ii). In addition, in order to simplify certain formal considerations, we assume the following:

\begin{Assumption}
The support  of the parameter space is bounded.
\end{Assumption}

This assumption simplifies, e.g.,  verifying various conditions in Kiefer and Wolfowitz (1956),
it also implies compactness of any class $\{ G \}$ of distributions on the closure of the parameter space, since that any sequence of distributions on the closure is tight.

%\subsection{Consistency. }

\subsection{ Scenario ii) } 

In Scenario ii) there is consistency in the estimation of  $E_G \Theta$,
when $P_G(\lambda=0) \equiv G(\{ \lambda=0 \})=0$,
as proved in the following theorem.

We first remind the notion of identifiability of mixtures.
Let $f_G(x)=\int f_\theta(x) dG(\theta)$. Then, $G$ is identifiable
if there exists no $\tilde{G}$ such that $f_G(x)=f_{\tilde{G}}(x)$ for every $x$.

 It is convenient to re-parametrize
the problem, as follows. Given $X_i \sim B(K_i, p_i)$ conditional on $K_i$,
and $K_i \sim Poisson(\lambda_i)$.
Denote $W_{i1} \equiv X_i$, and $W_{i2}=K_i- X_i$. Then $W_{i1}$ and
$W_{i2}$ are  independent  Poissons conditional on $(\lambda_i,p_i)$, with corresponding parameters 
$\xi_{1i} \equiv p_i \lambda_i$
and $\xi_{2i} \equiv (1-p_i)\lambda_i$. 

\begin{theorem}
Consider $G_\xi$ the distribution of $(\xi_1,\xi_2)$ as above.
Let $\hat{G}_\xi$ be the GMLE based on iid $(W_{i1}, W_{i2}), \; i=1,...,n$ from the mixture 
$G$.
Then $\hat{G}_\xi$ converges weakly to $G_\xi$. 
\end{theorem}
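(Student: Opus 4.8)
\emph{Plan of proof.} The plan is to deduce the stated weak convergence from the general consistency theorem for generalized maximum likelihood estimators of Kiefer and Wolfowitz (1956). That theorem asserts that the GMLE of a mixing distribution is weakly consistent provided (a) the mixture family is identifiable in the sense recalled just above the statement, and (b) a handful of regularity conditions (measurability and continuity of the kernel in the parameter, together with a compactness/tightness condition on the class of candidate mixing distributions) are met. Under the thinning reparametrization $W_{i1}=X_i$, $W_{i2}=K_i-X_i$, the kernel becomes the product of two independent Poisson densities,
\[
f_\xi(w_1,w_2)=\frac{e^{-\xi_1}\xi_1^{w_1}}{w_1!}\cdot\frac{e^{-\xi_2}\xi_2^{w_2}}{w_2!},
\]
and the $(W_{i1},W_{i2})$ are iid from the bivariate mixture $f_{G_\xi}=\int f_\xi\,dG_\xi$. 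The whole problem therefore reduces to verifying (a) and (b) for this product-Poisson mixture, where by the boundedness Assumption $G_\xi$ is supported on a compact subset of $[0,\infty)^2$.

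First I would establish identifiability, which is the substantive step. Multiplying $f_{G_\xi}(w_1,w_2)$ by $s_1^{w_1}s_2^{w_2}$ and summing over $w_1,w_2\ge 0$ gives the joint probability generating function
\[
\sum_{w_1,w_2\ge 0} f_{G_\xi}(w_1,w_2)\,s_1^{w_1}s_2^{w_2}
=\int e^{-\xi_1(1-s_1)-\xi_2(1-s_2)}\,dG_\xi(\xi_1,\xi_2),
\]
which is the Laplace transform of $G_\xi$ evaluated at $(1-s_1,1-s_2)$. Because $G_\xi$ has compact support, this transform extends to a (real-)analytic function on all of $\mathbb{R}^2$; knowing the mixture probabilities $f_{G_\xi}(w_1,w_2)$ for every $(w_1,w_2)$ is equivalent to knowing the transform on the square $s_1,s_2\in[0,1]$, and by analyticity together with the uniqueness theorem for Laplace transforms of finite measures this determines $G_\xi$ uniquely. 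Hence $f_{G_\xi}=f_{\tilde G_\xi}$ forces $G_\xi=\tilde G_\xi$, which is exactly identifiability. (Note that in the $\xi$-coordinates identifiability holds unconditionally, including any atom at the origin; the hypothesis $G(\{\lambda=0\})=0$ is only needed to pull the conclusion back to $E_G\Theta$ in the corollary, since at $\lambda=0$ the coordinate $p$ is not recoverable.)

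It then remains to check the regularity hypotheses and assemble the conclusion. For each fixed $(w_1,w_2)$ the map $\xi\mapsto f_\xi(w_1,w_2)$ is continuous (indeed smooth), the observation space $\mathbb{Z}_{\ge 0}^2$ is discrete, and existence of a GMLE over the class of all mixing distributions on the compact parameter space is standard. As remarked after the Assumption, compactness of the parameter space makes every sequence of candidate mixing distributions tight, so the relevant class is weakly compact; this supplies the compactness condition of Kiefer and Wolfowitz and precludes escape of mass. With identifiability in hand, their theorem yields $\hat G_\xi\Rightarrow G_\xi$, as claimed.

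The main obstacle I anticipate is not the Laplace-transform identifiability argument, which is essentially classical, but the careful matching of the present set-up to the precise integrability and continuity hypotheses of Kiefer and Wolfowitz (1956) on the non-compact observation space $\mathbb{Z}_{\ge 0}^2$. The boundedness Assumption is exactly what tames this: confining $G_\xi$ and every competitor to a compact parameter region gives uniform control of $f_\xi(w_1,w_2)$ and the needed tightness, so this verification should be routine rather than delicate.
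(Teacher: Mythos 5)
Your proposal is correct and follows essentially the same route as the paper: both reduce the statement to the Kiefer--Wolfowitz (1956) consistency theorem under the product-Poisson reparametrization, using the boundedness Assumption for the compactness/tightness condition, with identifiability of the bivariate Poisson mixture as the key hypothesis to verify. The only difference is that you prove identifiability directly via the probability generating function/Laplace-transform argument, whereas the paper simply cites Karlis and Xekalakis (2005) for that fact; your in-line argument is sound and correctly notes that identifiability in the $\xi$-coordinates needs no condition at $\lambda=0$, that condition entering only in the corollary.
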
 \label{thm:con}
\begin{proof}
The proof follows from Kiefer and Wolfowitz (1956). Checking the conditions is standard,
the identifiability condition is verified, e.g., by
Dimitris Karlis and Evdokia Xekalak (2005).
\end{proof}

\begin {corollary}
In Scenario ii) for any  function $\eta$, such that $\eta(\theta)=\psi(\xi_1,\xi_2)$,
for  $\psi$ which is continuous  and bounded on the support of $(\xi_1,\xi_2)$ under $G$, 
$E_{\hat{G}} \eta(\Theta) \rightarrow E_G \eta(\Theta)$.
\end{corollary}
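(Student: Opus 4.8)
The plan is to deduce the corollary directly from the weak convergence $\hat{G}_\xi \rightarrow G_\xi$ established in Theorem~\ref{thm:con}, treating everything in the $\xi$-parametrization where the mixture is identifiable and the convergence is available. First I would observe that the map sending $G$ to $G_\xi$ is a bijection between distributions of $\theta=(\lambda,p)$ (with $\lambda>0$) and distributions of $(\xi_1,\xi_2)$ supported off the origin, via $\xi_1=p\lambda$, $\xi_2=(1-p)\lambda$, with inverse $\lambda=\xi_1+\xi_2$, $p=\xi_1/(\xi_1+\xi_2)$; this inverse is continuous on the region $\{(\xi_1,\xi_2):\xi_1+\xi_2>0\}$. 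Since $\eta(\theta)=\psi(\xi_1,\xi_2)$ by hypothesis, the quantity of interest $E_G \eta(\Theta)$ equals $\int \psi\, dG_\xi$ and similarly $E_{\hat{G}} \eta(\Theta)=\int \psi\, d\hat{G}_\xi$, so the claim reduces to showing $\int \psi\, d\hat{G}_\xi \rightarrow \int \psi\, dG_\xi$.

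Next I would invoke the portmanteau/definition of weak convergence: if $\psi$ were continuous and bounded on all of $\mathbb{R}^2$, then $\hat{G}_\xi \Rightarrow G_\xi$ would give the convergence of integrals immediately. The hypothesis only gives continuity and boundedness of $\psi$ on the support of $(\xi_1,\xi_2)$ under $G$, so the step that needs care is extending $\psi$ to a globally bounded continuous function without disturbing the limit. Here I would use Assumption~1: the parameter space is bounded, so the support of $G_\xi$ (and, one expects, the supports of the $\hat{G}_\xi$, which the GMLE places within the relevant compact region) lies in a fixed compact set $S$. By the Tietze extension theorem I can extend $\psi|_S$ to a bounded continuous $\tilde\psi$ on $\mathbb{R}^2$ agreeing with $\psi$ on $S$, and then apply weak convergence to $\tilde\psi$.

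The main obstacle I anticipate is precisely the control of the supports of the estimators $\hat{G}_\xi$: weak convergence alone does not guarantee that $\hat{G}_\xi$ charges only the compact set $S$, and near $\xi_1+\xi_2=0$ the recovered $p=\xi_1/(\xi_1+\xi_2)$ becomes ill-defined, which is exactly why the condition $G(\{\lambda=0\})=0$ is imposed. I would handle this by noting that Assumption~1 forces every candidate $\hat G_\xi$ (being supported on the grid or closure of the bounded parameter space) to live in $S$, and that the excluded set $\{\lambda=0\}$ corresponds to $\xi_1=\xi_2=0$, which carries no mass under $G_\xi$; since $\psi$ is assumed bounded and continuous on the support under $G$, and the limit $G_\xi$ gives zero mass to the problematic boundary, the extension argument closes the gap. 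The remaining steps—uniform integrability is automatic from boundedness, and the continuous-mapping identity $E_G\eta=\int\psi\,dG_\xi$—are routine.
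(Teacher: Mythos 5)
Your overall route --- pass to the $\xi$-parametrization, identify $E_G\eta(\Theta)=\int\psi\,dG_\xi$ and $E_{\hat G}\eta(\Theta)=\int\psi\,d\hat G_\xi$, and then apply the weak convergence $\hat G_\xi\Rightarrow G_\xi$ from Theorem 1 --- is exactly the (implicit) argument of the paper, which states the corollary as an immediate consequence of the theorem without further proof. You also correctly spotted the one real subtlety: $\psi$ is only assumed continuous and bounded on the support of $G_\xi$, while $\hat G_\xi$ will in general charge points \emph{outside} that support. The problem is that your repair does not work as written. To apply Tietze to $\psi|_S$, where $S$ is the compact set containing the supports of $G_\xi$ and of all the $\hat G_\xi$, you need $\psi$ to be continuous \emph{on $S$}, which is not among the hypotheses; the hypotheses only give continuity on $\mathrm{supp}(G_\xi)\subset S$. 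If instead you extend $\psi$ from $\mathrm{supp}(G_\xi)$ (where Tietze does apply), the resulting $\tilde\psi$ need not agree with $\psi$ at points of $S\setminus\mathrm{supp}(G_\xi)$, and then $\int\tilde\psi\,d\hat G_\xi\neq\int\psi\,d\hat G_\xi=E_{\hat G}\eta(\Theta)$: the quantity you control by weak convergence is no longer the estimator in the statement. This is not a cosmetic point; with only the stated hypotheses the conclusion can genuinely fail (take $G_\xi=\delta_{(1,1)}$, $\psi$ continuous at $(1,1)$ but huge at points bounded away from $(1,1)$, and $\hat G_\xi$ placing mass $1/n$ at such a point --- weak convergence holds, convergence of the integrals does not).

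The clean way to close the gap is to read the hypothesis the way the paper's subsequent application requires, and invoke the almost-everywhere-continuity form of the portmanteau theorem: if $\psi$ is bounded on the compact $\xi$-image of the parameter space (guaranteed meaningful by Assumption 1) and its set of discontinuities $D_\psi$ satisfies $G_\xi(D_\psi)=0$, then $\hat G_\xi\Rightarrow G_\xi$ implies $\int\psi\,d\hat G_\xi\to\int\psi\,dG_\xi$. No extension of $\psi$ is needed, and no control of where $\hat G_\xi$ puts its mass beyond compactness. This is precisely what makes the paper's intended application go through: $\psi_\epsilon(\xi_1,\xi_2)=\frac{\xi_1}{\xi_1+\xi_2}\,1\{\xi_1+\xi_2>\epsilon\}$ is globally bounded by $1$ and discontinuous only on $\{\xi_1+\xi_2=\epsilon\}\cup\{(0,0)\}$, a $G_\xi$-null set for all but countably many $\epsilon$ when $G(\{\lambda=0\})=0$. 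If you prefer to keep your Tietze argument, you must additionally prove that $\psi$ is continuous on a neighborhood $U$ of $\mathrm{supp}(G_\xi)$ and then use $\liminf\hat G_\xi(U)\geq G_\xi(U)=1$ (also from portmanteau) to show the mass $\hat G_\xi$ places outside $U$, where $\tilde\psi$ and $\psi$ may disagree, contributes $o(1)$; neither of these two steps appears in your write-up.
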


In our setup $\eta_\epsilon(\theta)=\psi_\epsilon(\xi_1,\xi_2) 
\equiv\frac{\xi_1}{\xi_1 + \xi_2} 1\{\xi_1+\xi_2>0\}$ satisfies
the requirements   for any $\epsilon>0$. If $G( \{ \lambda=0 \})=0$
then $\lim_{\epsilon \rightarrow 0} E_G \eta_\epsilon= E_G \Theta_2$, and $E_{\hat{G}} \Theta_2 \rightarrow E_G \Theta_2$.

% if the points $\theta=(\lambda,p)=(0,p)$ are not in the support of $G$, for any $p$. 
%In order to have both continuity  boundness, the following are two natural conditions.  a) the support of $G$ satisfy  that for each 
%point $(\lambda, p)$ in the support $0<\lambda_0<\lambda$. b)  For each point $(\lambda,p)$ in the support of $G$, $\lambda>0$ 

%QUESTION: DO WE HAVE EFFICIENCY IN THE POISON CASE?

\subsubsection{Persistence} \label{rate}
 When confining ourselves  to the class of distributions in 
$\Gamma= \{ G | G(\{\lambda=0 \})=0 \}$,
we have consistency of the estimator $E_{\hat{G}} \Theta_2$ for $E_G \Theta_2$ for
any $G \in \Gamma$. 
However, the convergence may be arbitrarily slow. In particular there
is no {\it persistence}, specifically we may find a sequence of distributions $G_n$, $G_n \in \Gamma$
such that  for some $\epsilon >0$, $P_{G_n} (|\hat{p}^n- p^n|>\epsilon) \not\rightarrow 0$,
where $\hat{p}^n =E_{\hat{G}_n} \Theta_2$, $\hat{G}_n$ is a GMLE, and $p^n=E_{G_n} \Theta_2$. 

Persistence is implied, e.g., when we narrow ourselves to the collection $\Gamma'$ of all distributions $G$ whose support is uniformly bounded away from $\{ \lambda=0 \}$. This, follows by utilizing
the compactness of $\Gamma'$.

The slow rate in the estimation of $E_G (\Theta_2)$, stems from the 
slow rate in the estimation of $E_G(\Theta_2|K=0)$. If $P_{G_n}( K>0)> \epsilon_0$,  for some $\epsilon_0>0$, then the estimation of $E_{G_n}(\Theta_2|K>0)$ and
$P_{G_n}(K>0)$ may be done in $\sqrt{n}-$rate.

For a fixed $G$, whose support does not include points with $\lambda=0$,  we do not know the rate at which $E_{\hat{G}} \Theta_2$ approaches $E_G \Theta_2$. A more challenging problem is
the optimality of $\hat{p}^n$ in the following sense. Is there a sequence $\tilde{p}^n$ such that for any sequence $G_n \in \Gamma$, if $\hat{p}^n -p^n \rightarrow_{G_n} 0$ then
$\tilde{p}^n -p^n \rightarrow_{G_n} 0$, but the converse does not hold? here, the convergence is in probability.

%is the following. Consider a sequence of distributions 
%$...\Gamma^n \subseteq \Gamma^{n+1} \subseteq... \Gamma$, e.g., $\Gamma^n$ is the class
%of all distribution whose support points are bounded away from $\{ \lambda=0 \}$ by $d_n$. %It is of an interest to characterize the rate at which $\Gamma^n$ may increase, e.g., the %rate at which $d_n$ may decrease, so that persistence is implied i.e., for any sequence $G_n %\in \Gamma^n$, $\hat{p}^n -p^n \rightarrow_{G_n} 0$.  

The theoretical Exploration of persistence is in order 
to understand how to avoid introducing too delicate stratification,
where $p^n$ can not be `decently' estimated. In practice, one could apply 
the 'non-standard cross-validation' suggested in Brown, et.al. (2013), in order to determine
whether the estimation of $p^n$, for a specific stratification and a corresponding  $n$, is
too `ambitious' and `unreliable'.
Another way to evaluate  a specific stratification is the method suggested in Sub-Section \ref{subsec:bounding}, of obtaining  bounds for $p^n$ based on the sample.

%However, asymptotic
%analysis under $\Gamma'$ is not interesting from the point of view of this paper, since
%under $\Gamma'$, when $n \rightarrow \infty$, asymptotically there will be no unobserved %strata, while the essence of this paper is the treatment of unobserved strata.   

%$\{ G \}$ whose support points do not contain $\theta=(0,p)$ for any $p$, we have consistency of the estimator $E_{\hat{G}} P$
%of $E_G P$, but
%there is no uniform convergence  rate 
%$E_{\hat{G}} P$ to $E_G P$, 
%and it may be arbitrarily slow. 
%Specifically, $$ \liminf_n \sup_{G \in \{G\} }  E_G |E_{\hat{G}}P -E_G P|>0.$$
%This may be seen since
%$G \in \{ G \}$  may have arbitrarily large portion of its mass at points $(\lambda,p)$, where  $\lambda$ is in an arbitrarily small interval $(0,\epsilon)$. 
%The same applies to the estimation of $E_G(P|K=0)$.

\subsection{Scenario i) } 

In Scenario i) there is no consistency in estimating $G$, neither a consistency in
estimating $E_G(\Theta_2)$. This is a result of the non-identifiability, and it is demonstrated in
the following example for   $\kappa \equiv 1$.

\begin{example} \label{ex:bin}
When $\kappa=1$ there are $M=3$ possible outcomes of the $i'th$ observation, we list them as: $X_i=1$,  $X_i=0$, and $K_i=0$; the corresponding probabilities are: 
$(\pi_i p_i, \pi_i(1-p_i), (1-\pi_i))$. 
Suppose the outcomes of $n$ realization have $n_1$ occurrences of $X_i=1$,  $n_2$ occurrences of $X_i=0$, and $n_3$ occurrences of $K_i=0$. Note $(n_1,n_2,n_3)$ is multinomial.
Suppose $\frac{1}{n}(n_1,n_2,n_3)=(0.25,0.25,0.5)$, obviously the following $\hat{G}_1$ and 
$\hat{G}_2$
are both GMLE. Let $\hat{G}_1$ be degenerate at $(\pi,p)=(0.5,0.5)$. Let $\hat{G}_2$ be the distribution whose
support is $(0,1), (1,0), (1,1)$ with corresponding probabilities 0.5, 0.25, 0.25.
Then obviously both $\hat{G}_1$ and $\hat{G}_2$ are GMLE, while $E_{\hat{G}_1} \Theta_2 =0.5 \neq E_{\hat{G}_2} \Theta_2 = 0.75.$ 
\end{example}

%The results in the sequel, give some insight  about the performance of the GMLE estimator 
%$E_{\hat{G}} P$ for $E_G P$  in  situations 
%with non-identifiability,
%and more generally in non-identifiable mixture problems and GMLE estimators 
%$\hat{\eta}=E_{\hat{G}} \eta(\Theta)$ for $E_G \eta(\Theta)$.

%Let $(Y,\Theta) \sim G^*$ where $G^*$ is induced by the mixture $G$.

\subsection{ Alternative estimators and representations.}

Given a function $\eta$, suppose it is desired to estimate
$E_G \eta(\Theta)$.
By the  following theorem, the estimator $\hat{\eta}= E_{\hat{G}} \eta( \Theta)$ equals to the average of 
$E_{\hat{G}}(\eta(\Theta) |   Y_i)$, $i=1,...,n$. The appeal of this fact is that if we estimate
 the values of the individual $\eta(\Theta_i)$ via non-parametric empirical Bayes under squared loss, specifically by 
$E_{\hat{G}} (\eta(\Theta )| Y_i)$, there is a consistency and agreement between the estimates of the individual parameters and the estimates of their total, or, of their average.
In Zhang (2005), the problem of estimating random sums involving a latent variable is explored, in particular 
the existence of $\sqrt{n}$ consistent estimators and their efficiency. One approach in 
Zhang (2005) is to estimate the random sum, by the sum of the estimated  conditional expectations of the summands. This is analogous to estimate  $\sum_i \eta(\Theta_i)$ by
$\sum_i E_{\hat{G}} (\eta(\Theta) | Y_i)$, the last term equals to $nE_{\hat{G}} \eta(\Theta)$ by the following theorem.

\begin{theorem} \label{thm:agreement}
Assume $\eta$ is a bounded function, then:
$$\hat{\eta} \equiv E_{\hat{G}} \eta(\Theta) = \frac{1}{n}\sum _i E_{\hat{G}}(\eta(\Theta )| Y_i).$$
\end{theorem}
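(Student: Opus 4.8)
The plan is to exploit the first-order optimality (stationarity) conditions that the GMLE $\hat{G}$ must satisfy, which is the standard engine behind such ``self-consistency'' identities in nonparametric empirical Bayes. Specifically, since $\hat{G}$ maximizes $\ell(\tilde{G}) = \sum_i \log f_{\tilde{G}}(Y_i)$ over the convex set of all distributions on the (closed, by Assumption~1, hence compact) parameter space, a directional-derivative argument applies. First I would introduce, for any candidate $\tilde{G}$, the perturbation $G_t = (1-t)\hat{G} + t\tilde{G}$ and compute $\frac{d}{dt}\ell(G_t)\big|_{t=0^+}$. Because $\hat{G}$ is the maximizer and the log-likelihood is concave in $\tilde{G}$, this derivative is $\le 0$ for every $\tilde{G}$; taking $\tilde{G} = \delta_\theta$ a point mass yields the familiar gradient condition
\begin{equation*}
D(\theta) \equiv \frac{1}{n}\sum_{i=1}^n \frac{f_\theta(Y_i)}{f_{\hat{G}}(Y_i)} \le 1 \quad\text{for all } \theta,
\end{equation*}
with equality on the support of $\hat{G}$. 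This last part, the equality $D(\theta)=1$ holding $\hat{G}$-almost everywhere, is the crucial ingredient.

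Given that support equality, the main computation is a change-of-order-of-integration. I would write the right-hand side of the claim as
\begin{equation*}
\frac{1}{n}\sum_i E_{\hat{G}}(\eta(\Theta)\mid Y_i) = \frac{1}{n}\sum_i \frac{\int \eta(\theta) f_\theta(Y_i)\, d\hat{G}(\theta)}{f_{\hat{G}}(Y_i)} = \int \eta(\theta) \left[\frac{1}{n}\sum_i \frac{f_\theta(Y_i)}{f_{\hat{G}}(Y_i)}\right] d\hat{G}(\theta),
\end{equation*}
using Bayes' formula for the posterior $E_{\hat{G}}(\eta(\Theta)\mid Y_i)$ and then Fubini to pull the $\theta$-integral outside. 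The bracketed quantity is exactly $D(\theta)$. Since the outer integral is with respect to $\hat{G}$, only values of $\theta$ in the support of $\hat{G}$ matter, and there $D(\theta)=1$. Hence the whole expression collapses to $\int \eta(\theta)\, d\hat{G}(\theta) = E_{\hat{G}}\eta(\Theta) = \hat{\eta}$, which is the desired identity.

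The step I expect to be the main obstacle is justifying that the stationarity condition holds with \emph{equality} on the support of $\hat{G}$, rather than merely the inequality $D(\theta)\le 1$ everywhere. The inequality alone is not enough: I need $D=1$ throughout $\mathrm{supp}(\hat{G})$. The clean way to obtain this is to note that integrating $D(\theta)$ against $\hat{G}$ gives $\int D\, d\hat{G} = \frac{1}{n}\sum_i \frac{\int f_\theta(Y_i) d\hat{G}(\theta)}{f_{\hat{G}}(Y_i)} = \frac{1}{n}\sum_i 1 = 1$; combined with $D\le 1$ everywhere, this forces $D=1$ $\hat{G}$-a.e. This averaging trick sidesteps any delicate support-characterization and is where Assumption~1 (compactness, guaranteeing the maximizer exists and the directional derivatives are well defined) quietly does its work. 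A minor secondary point is to confirm $f_{\hat{G}}(Y_i)>0$ for every observed $Y_i$ so that all the ratios are finite, which holds because $\hat{G}$ assigns positive likelihood to the data it maximizes.
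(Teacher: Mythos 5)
Your proof is correct, and it takes a genuinely different route from the paper's. The paper proves the identity by observing that the GMLE is a fixed point of the EM iteration map, i.e.\ $d\hat{G}(\theta)=\frac{1}{n}\sum_i f(Y_i|\theta)\,d\hat{G}(\theta)/f_{\hat{G}}(Y_i)$, and then integrates $\eta$ against both sides; when the maximizer is not unique it resorts to an auxiliary construction with equivalence classes of distributions having equal likelihood, so that EM convergence to a representative can be asserted. You instead derive the same key fact --- that $D(\theta)=\frac{1}{n}\sum_i f_\theta(Y_i)/f_{\hat{G}}(Y_i)$ equals $1$ $\hat{G}$-almost everywhere --- from the variational (first-order) characterization of the nonparametric MLE: directional derivatives toward point masses give $D\le 1$ everywhere, and your averaging observation $\int D\,d\hat{G}=1$ upgrades this to equality $\hat{G}$-a.e. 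This is essentially Lindsay's gradient characterization of the NPMLE. What your route buys is robustness and rigor: it applies to \emph{any} global maximizer directly from its definition, with no appeal to convergence of the EM algorithm (which in general can stall at non-global stationary points) and no special handling of non-uniqueness --- the identity simply holds for every GMLE. What the paper's route buys is a direct link to the computational procedure it actually uses (in the non-unique case the paper \emph{defines} its estimator as the limit of the EM iterations, so phrasing the proof through the EM fixed point is natural there), at the cost of a less self-contained argument. Two minor points you should make explicit if you write this up: the interchange of the finite sum with the $\theta$-integral needs $\eta$ integrable under $\hat{G}$ (bounded $\eta$, as assumed elsewhere in the paper, suffices), and existence of a maximizer uses the compactness provided by Assumption 1 together with upper semicontinuity of the likelihood.
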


\begin{proof}

In the following proof we give two independent arguments in two parts. Part ii) is general, while Part i)
is not, since it assumes convergence of the EM-algorithm. Part i)  is brought in order to give further perspective, and to specify the EM-algorithm which is used in this paper.

i) Given a realization $Y_1,...,Y_n$, assume that the EM-algorithm converges to a unique maximum $\hat{G}$.
The $(k+1)'th$ iteration
is related to the $k'th$ iteration via 
$$d\hat{G}^{k+1}(\theta_0)= \frac{1}{n}\sum_i \frac{ f(Y_i|\theta_0)d\hat{G}^{k}(\theta_0)}
{ \int  f(Y_i|\theta)d\hat{G}^{k}(\theta)}.$$
For $k=\infty$, when the GMLE $\hat{G} \equiv \hat{G}^{\infty}$,  is plugged into the above equality,
the proof follows when taking the expectation of $\eta(\Theta)$ under both representations, and
interchanging the order of summation and integration  that correspond to the right hand side.

ii) For  a given GMLE $\hat{G}$, a function $\eta$, and $\hat{\eta}=E_{\hat{G}} \eta(\Theta)$, let 
$h(\theta)=\eta(\theta) -\hat{\eta}$; note, $E_{\hat{G}} h(\Theta)=0$. 
Let $$d\hat{G}_t(\theta)= (1+t*h(\theta))d\hat{G}(\theta).$$ 
Since $\eta$ is bounded, for a small enough $t_0$ we may
consider the class of distributions $\{ G_t, \; t \in (-t_0,t_0) \}$.

Since $\hat{G}$ is a GMLE  we have:
\begin{eqnarray*}
0&=& \frac{d}{dt} \sum_i  \log (\int f(Y_i|\theta) d\hat{G}_t(\theta) ) |_{t=0}\\
&=&\sum_i \frac { \int h(\theta) f(Y_i|\theta) d\hat{G}(\theta) }
{\int f(Y_i|\theta) d\hat{G}(\theta)}.
\end{eqnarray*}
Hence,
$\frac{1}{n}\sum_i \int \eta(\theta) d\hat{G}(\theta|Y_i)= \hat{\eta}.$

\end{proof}

\bigskip

Consider $Y_i=(X_i,K_i)$, as in scenarios i) and ii), and let

\[ \Psi_{\hat{G}}^*(Y) \equiv \Psi_{\hat{G}}^*  (X,K) = \left \{ \begin{array}{ll}
\frac{X}{K} \;\;  & \; K>0 \\
\\
E_{\hat{G}}(\Theta_2|K=0) \;\;\;       &   \; K=0\\
\end{array}
\right. \]

We write $\Psi_{\hat{G}}^* \equiv \Psi^*$. 
%and in the sequel for a general $\Psi_{\hat{G}}$ function,
%we  write $\Psi_{\hat{G}} \equiv \Psi$.

The estimator 
\begin{equation} \label{eqn:appeal} 
\tilde{\eta}^*=\frac{1}{n} \sum_i \Psi^*(Y_i), 
\end{equation}
 also has an appeal.
In our simulations the estimators $\tilde{\eta}^*$ and $\hat{\eta}$ are nearly equal.
For example in the 50 simulations described in Table 2, for
the cases  $p^I=0.2, 0.3, 0.4$,
the corresponding averages of the absolute differences  $|\tilde{\eta}_j^*-\hat{\eta}_j|, \;j=1,...,50,$  are
$0.00075, 0.0011, 0.0011$.

From Sub-Section \ref{sec:expfam} it follows that in fact $\hat{\eta}$ and $\tilde{\eta}^*$
are equal.

\subsubsection  { GMLE for the mean of a mixture of an exponential family} 
\label{sec:expfam}

Let $Y_i \sim N(\theta_i,1), \; i=1,...,n$ be independent observations, the obvious estimator for  $\sum \theta_i$ is $\sum Y_i$. 
%For a GMLE $\hat{G}$, let $\Psi_{\hat{G}} (Y)= Y$, then 
%the last estimator may be written as $\sum \Psi _{\hat{G}} (Y_i)$. Observe that $E_{\hat{G}} %\Theta= E_{\hat{G} } \Psi _{\hat{G}} (Y)$ as in Theorem \ref{thm:equiv}.
One may wonder about a comparison between the obvious estimator $\sum Y_i$ and the estimator $nE_{\hat{G}} \Theta$. 
Similarly on may wonder about the analogous situation where $Y_i \sim Poisson(\lambda_i)$,
and it is desired to estimate $\sum \lambda_i$.
In the following we will show that the
two estimators are in fact equal. Thus, "`GMLE sophistication"' does not improve on the obvious estimator, yet, it does not  harm.

The following theorem covers a general situation. The notations are for a one dimensional
exponential family, but it applies for a general exponential family. 

\begin{theorem} \label{thm:expfam} Let $Y_1,...,Y_n$, be independent observations 
$Y_i \sim f_{\theta_i} (y)$.
Suppose that the density of $Y$ is of the form $f_\theta(y) \equiv f(y|\theta)= \exp(\theta y-\psi(\theta))$, $\theta \in \Omega$,
with respect to some dominating measure $\mu$. 
Let $\hat{G}$ be a GMLE whose support is on interior points of the parameter set $\Omega$.

%Suppose $\theta \in (a, b)$, where either
%$a=-\infty$ or $b=\infty$. Let $\eta(\theta)= E_\theta Y$.
Then: $$nE_{\hat{G}} \eta(\Theta)= \sum Y_i.$$
\end{theorem}
\begin{proof}
Given the observations $Y_1,...,Y_n$, let $\hat{G}$ be a GMLE, define
the translation $\hat{G}_\Delta=\hat{G}+ \Delta$.

Note: $$ \int \exp(\theta y -\psi(\theta)) d \hat{G}_\Delta(\theta)= 
 \int \exp((\theta-\Delta) y -\psi(\theta-\Delta)) d \hat{G}(\theta).$$

Since $\hat{G}$ is GMLE,
\begin{eqnarray*}
0&=& \frac{d}{d\Delta} \sum \log(\int \exp(\theta Y_i -\psi(\theta)) 
d \hat{G}_\Delta(\theta) ) |_{\Delta=0} \\
&=&  \frac{d}{d\Delta} 
\sum \log ( \int \exp((\theta-\Delta) Y_i -\psi(\theta-\Delta) ) d \hat{G}(\theta) ) |_{\Delta=0}\\
&=& \sum  \frac{  \int (-Y_i + \psi'(\theta) )  \exp(\theta Y_i -\psi(\theta)) d \hat{G}(\theta) }  
{\int \exp(\theta Y_i -\psi(\theta)) d \hat{G}(\theta)}\\
&=& -\sum Y_i + \sum E_{\hat{G}} (\psi'(\Theta) | Y_i)\\
&=& -\sum Y_i + nE_{\hat{G}} \eta(\Theta)
\end{eqnarray*}

The last equality follows by Theorem \ref{thm:agreement}, and since  for exponential family
$\eta(\theta)=\psi'(\theta)=E_\theta Y$.

\end{proof}

The above theorem implies that under an exponential family setup, given two GMLE
$\hat{G}_1$ and $\hat{G}_2$, we have $E_{\hat{G}_1} \Theta= E_{\hat{G}_2} \Theta$.

The above theorem may be extended to cover higher moments of the canonical variable.

\begin{corollary} \label{cor:naive}
In scenarios i) and ii), suppose $K_i>0, \; i=1,...,n$. Then the naive estimator
$\frac{1}{n}\sum \frac{X_i}{K_i}$ is GMLE for $p$.
\end{corollary}

\begin{proof}
We should show that $E_{\hat{G}} \Theta_2= \frac{1}{n}\sum \frac{X_i}{K_i}$.
We elaborate on scenario ii).  Note that conditional on $(\lambda_i,p_i)$,
$(K_i, X_i)$ may be considered as canonical variables of a member of a two dimensional exponential family
with canonical parameters $(\log(\lambda_i), \log( p_i/(1-p_i) )$. Transforming
the variables to $(K_i,\frac{X_i}{K_i}) \equiv (W_{i1},W_{i2}) \equiv W_i$,  defines
new canonical variables in a corresponding exponential family, with a corresponding distribution $G_\xi$ of the induced new canonical parameters, denoted 
$\xi_i \equiv (\xi_{i1},\xi_{i2})$; a corresponding $\psi(\xi_{i1},\xi_{i2}) \equiv  \psi(\xi_i)$ is also defined, where $\psi'(\xi_i)= E_{\xi_i} W_i$.
By Theorem \ref{thm:expfam}, $E_{\hat{G}_\xi } \psi' (\xi)=\frac{1}{n} \sum W_i$.
\end{proof}

\bigskip

A main conclusion of the above development is that the estimators $\tilde{\eta}^*$,
%see \ref{eqn:appeal} 
and $\hat{\eta}$, are equal.

\begin{corollary}
Let $\hat{G}$ be GMLE under scenarios i) and ii). Suppose the support of $\hat{G}$ is 
is on interior points of the parameter space. Then $\tilde{\eta}^*=\hat{\eta}$.
\end{corollary}

\begin{proof}
Consider the representation in Corollary  \ref{cor:naive}, with $W_i=(W_{i1},W_{i2})$
and $\xi_i=(\xi_{i1},\xi_{i2})$; when $W_{i1}=0$ we define $W_{i2} \equiv NULL$. For observations $W_i$ with $W_{i1}>0$ the density $f(W_i|\xi,W_{i1}>0)$ has an exponential
family form with a corresponding $\psi(\xi_1,\xi_2)$; for $W_i$ with $W_{i1}=0$, 
$f(W_i|\xi)=P_\xi( W_{i1}=0)$.
Let $p_i=\eta(\xi_i)=E_\xi(W_{i2}| W_{i1}>0)$.
Then, by Theorem \ref{thm:agreement} $$E_{\hat{G}}\eta(\xi)= \sum_i E_{\hat{G}} ( \eta(\xi)| W_i)=
\sum_{W_{i1}=0} E_{\hat{G}} ( \eta(\xi)| W_i) + \sum_{W_{i1} > 0} E_{\hat{G}} ( \eta(\xi)| W_i).$$
In order to prove the claim we need to show that 
$$ \sum_{W_{i1}>0}  E_{\hat{G}} ( \eta(\xi)| W_i)= \sum_{W_{i1}>0} W_{i2}  .$$
In order to show the later we use an argument similar to that in 
Theorem \ref{thm:expfam}.
Let $\Delta=(0,\Delta_1)$ and $\hat{G}_\Delta=\hat{G}+\Delta$.
Since $\hat{G}$ is a maximal point we have the following for the  derivative.
\begin{eqnarray*}
 0 &=& \frac{d}{d \Delta_1} \Bigl(  
  \sum_{W_{i1}=0} \log(\int f(W_i|\xi) ) d\hat{G}(\xi)\\
	&+& \sum_{W_{i1}>0} \log(\int P_\xi (W_{i1}>0) f(W_i|\xi, W_{i1}>0) ) d\hat{G}(\xi)
								\; \Bigl)|_{\Delta_1=0} \\
  &=& 0 \; + \; \sum_{W_{i1}>0} \int (-W_{i1} + \eta(\xi) \; ) d\hat{G} (\xi | W_i)
\end{eqnarray*}
The last equality is obtained along the lines of Theorem  \ref{thm:expfam},
observing that $P_{\xi_i} (W_{i1}>0)$ does not depend on $\xi_{i2}$ and that 
$P_\xi(W_{i1}>0) f(W_i|\xi,W_{i1}>0)= f(W_i|\xi_i)$ when $W_{i2}>0$.
The assertion now follows.

\end{proof}

\subsection{ Bounding population's average.} \label{subsec:bounding} 

As demonstrated in Example 1, the GMLE may be inconsistent. We suggest a way
to bound the value of $E_G \Theta$,
via a confidence interval.
 The approach is related to Greenshtein and Itskov (2018).  It is presented  for cases where the number
of outcomes $M$ is finite, denoted $(U_1,...,U_M)$. Let $\hat{F}_n$ be the empirical distribution of $(U_1,...,U_M)$.  For every $G$ consider the convex set $\Gamma$ of all 
mixtures $G$ satisfying,
$ L(\hat{F}_n|Q_G)< \chi^2_{(M-1)} (1-\alpha)$. Here $L$ is the log-likelihood, and $Q_G$
is the distribution induced by the mixture $G$ on $U_i,\; i=1,...,M$.

Then, an asymptotic conservative level-$\alpha$ CI is the solution to the  convex problem:

$$(\min_{G \in \Gamma} E_G \Theta, \max_{G \in \Gamma} E_G \Theta ).$$
Here we use the asymptotic $\chi^2_{M-1}$ distribution of the log-likelihood.

%In case there are  countably many possible outcomes, under Assumption \ref{ass},
% by Proposition \ref{prop:O}, an asymptotically level-1
%CI is the solution of the convex problem above, with the set $\Gamma$ of all distributions
%$G$ that satisfy, $L(\hat{F}_n|G) < \log(n)^{1+\alpha} $, for a 'small' $\alpha>0$.

An elaboration on the computation is beyond the scope of this paper, and beyond the `scope' of the authors.
In case of an infinite number of possible outcomes a related approach is to "`discretize"'
to $M$ possible outcomes. One may  want to allow $M=M_n$ but, a formal treatment 
of those ideas is, again, beyond the scope of this paper.

\section {Genralizations.} \label{sec:gen}

{\bf General Setup.} 
In each strata $i$, $i=1,...,n$, we 
observe
$K_i$ i.i.d  observations $X_{ij}, \; j=1,...,K_i$,  
with mean $\mu_i$. 

Suppose the proportion of strata $i$ in the
population 
is $a_i$, and it is desired to estimate $\sum a_i \mu_i$. 
As in the previous sections $K_i$ may equal zero.
We treat the data as iid vectors  $(K_i,\mu_i,a_i,X_{i1},X_{i2},....)$, 
$i=1,...,n$, 
from a distribution $G^*$, where we observe
$K_i$, $a_i$, and $(X_{i1},...,X_{iK_i})$. Hence, $\sum a_i \mu_i$ is
random and in fact we attempt to estimate $E_{G^*} \sum a_i \mu_i $. 
%For every $i$, $X_{ij}, \;j=1,2,...$ are i.i.d.

In case $K_i=0$,  
we define  $(X_{i1},...,X_{iK_i}) \equiv NULL$. 

As before $K_i$ is random. We stress
that there is no 
assumption on the joint distribution of  $K_i$, $a_i$ and $\mu_i$.

%In addition there are no parametric assumptions 

%on the conditional distribution of $X_{ij}$ conditional 

%on $K_i$. 

%The formulation is convenient  when assuming that $\mu_i$ 

%determines
%the conditional distribution of $X_{ij}$.

Let $Z_{ij}=a_i X_{ij}$, we confine ourselves to estimators 
which are functions of $(Z_{i1},...,Z_{ik_i})$,  $i=1,...,n$; 
again,
when $K_i=0$  we  observe "`NULL"'.
Note, when $K_i>0$ w.p.1, the natural ( predictor) estimator for 
 ($\sum a_i \mu_i$)  $E_{G^*} \sum a_i \mu_i$ is: 

$$\frac{1}{n}\sum_i \frac{\sum^{K_i}_j Z_{ij}}{K_i}.$$
Note further,  $$\mu \equiv E_{G^*} \sum a_i \mu_i= E_{G^*} Z_{11}=\int P_{G^*}( Z_{11}>c) dc .$$
For appropriate estimator $\hat{P}(Z_{11}>c)$  
we will estimate ${\mu} \equiv E_{G^*} \sum a_i \mu_i$  by 
 $$\hat{\mu} =\int \hat{P}( Z_{11}>c) dc .$$

Specifically, for every $c$  define  the indicator $ I_{ij}^c= I(Z_{ij}>c)$. 
Then, conditional on  $K_i$,
the following Binomial random variables are defined,
$\sum_j I_{ij}^c \sim B(K_i,p_i^c)$.

Analogously to the previous sections, where binary variables were treated,
we may estimate 
$P_{G^*}(Z_{11}>c) =E_{G^*} \frac{1}{n} \sum p_i^c \approx\frac{1}{n} \sum p_i^c $ by 
$\hat{p}^c$.

Now we may  apply the estimator:
$$\hat{\mu}=\int \hat{p}^c dc .$$

\bigskip

{\bf \Large References:}

\begin{list}{}{\setlength{\itemindent}{-1em}\setlength{\itemsep}{0.5em}}
\item
Brown L.D., Greenshtein, E. and Ritov, Y. (2013). The Poisson compound decision revisited. {\it JASA.} {\bf 108} 741-749.
\item
Dimitris Karlis and Evdokia Xekalak (2005), Mixed Poisson Distributions,
{\it International Statistical Review}, 73, 1, 35–58, Printed in Wales by Cambrian Printers.
\item
Greenshtein, E. and Ritov, Y. (2004). Persistence in high-dimensional linear predictor selection and the virtue of overparametrization. {\it Bernoulli}, Vol 10, Number 6 (2004), 971-988.
\item
Greenshtein, E. (2006). Best subset selection, persistence in high-dimensional statistical learning and optimization under l1 constraint. {\it Ann.Stat.}
Volume 34, Number 5, 2367-2386.
\item
Greenshtein, E. and Itskov, T (2018), Application of Non-Parametric  Empirical
Bayes to Treatment of Non-Response. {\it Statistica Sinica} 28 (2018), 2189-2208.
\item
Gu, J. and Koenker, R. (2017). Unobserved Heterogeneity in Income Dynamics: An Empirical Bayes Perspective. {\it Journal of Business and Economics Statistics}. Volume 35, 2017 - Issue 1
\item
Kiefer, J. and Wolfowitz, J. (1956). Consistency of the maximum likelihood estimator in the presence of infinitely many incidental parameters. {\it Ann.Math.Stat.}
27 No. 4, 887-906.
\item
Koenker, R. and Mizera, I. (2014). Convex optimization, shape constraints,
compound decisions and empirical Bayes rules. {\it JASA } 109, 674-685.
\item
Lindsay, B. G. (1995). Mixture Models: Theory, Geometry and Applications.
Hayward, CA, IMS.
\item
Little, R.J.A and Rubin, D.B. (2002). Statistical Analysis with Missing Data.
New York: Wiley
\item
Long, Feng and Lee, H. Dicker (2018).
Approximate nonparametric maximum likelihood for mixture models: A convex optimization approach to fitting arbitrary multivariate mixing distributions. {\it Computational Statistics \& Data Analysis} 122: 80-91.
\item
J.M. Robins and Y. Ritov. Toward a curse of dimensionality appropriated
(CODA) asymptotic theory for semi-parametric models. {\it Statistics in Medicine},
16:285–319, 1997.
\item
Rosenbaum, P. R. and Rubin, D. B. (1983). The central role of the propensity score in observational studies for causal effects. {\it Biometrika,} Vol 70, Issue 1, April 1983, Pages 41–55.
\item
Teicher, H. (1963). Identifiability of finite mixtures. {\it Ann. Math. Stat.},
{\bf 34}, No. 4, 1265-1269. 
\item
Wolter, K. M. (1985). Introduction to Variance estimation. Second edition, Springer.
\item
Zhang, C-H. (2005). Estimation of sums of random variables: Examples and information bounds. {\it Ann. Stat.} {\bf 33}, No.5. 2022-2041.

\end{list}
\end{document}